\documentclass{amsproc}
\usepackage[utf8]{inputenc}
\usepackage[T1]{fontenc}
\usepackage{amsmath,amssymb,amsthm}
\usepackage{url}
\usepackage[hidelinks]{hyperref}
\hypersetup{
  pdftitle={When is a polynomial in three variables cylindrical?},
  pdfauthor={Xiaojin Lin}
}

\newtheorem{theorem}{Theorem}
\newtheorem{lemma}[theorem]{Lemma}
\newtheorem{corollary}[theorem]{Corollary}
\newtheorem{proposition}[theorem]{Proposition}
\theoremstyle{definition}
\newtheorem{remark}[theorem]{Remark}

\newcommand{\C}{\mathbb{C}}
\newcommand{\R}{\mathbb{R}}
\newcommand{\PP}{\mathbb{P}}
\newcommand{\Hess}{\operatorname{Hess}}
\newcommand{\adj}{\operatorname{adj}}
\newcommand{\rank}{\operatorname{rank}}
\DeclareMathOperator{\Sing}{Sing}

\title[When is a polynomial in three variables cylindrical?]{When is a polynomial in three variables cylindrical?}

\author{Xiaojin Lin}
\address{Yau Mathematical Sciences Center, Tsinghua University, Beijing 100084, People's Republic of China}
\email{xj-lin@mail.tsinghua.edu.cn}
\thanks{ORCID: \url{https://orcid.org/0009-0009-8681-2096}}

\subjclass[2020]{Primary 14J70; Secondary 14R25, 12E05, 53A20}
\keywords{bordered Hessian, developable surface, polynomial decomposition, Bertini--Krull theorem, Gauss map, affine cylinder}

\begin{document}

\begin{abstract}
We prove that a nonconstant polynomial $f\in\C[x_1,x_2,x_3]$ is
cylindrical (that is, $f\in\C[x_1,x_2]$ after an invertible linear
change of coordinates) if and only if its bordered Hessian determinant
vanishes identically. The same conclusion holds over $\R$.
We also give explicit counterexamples to this criterion in every
dimension $n\ge4$.
\end{abstract}

\maketitle

\section{Introduction}

Let $f\in\C[x_1,x_2,x_3]$ be a nonconstant polynomial, $\nabla f$ its
gradient and $\Hess f$ its Hessian matrix. We call $f$
\emph{cylindrical} if it is independent of one variable after an
invertible linear change of coordinates. Following Fox \cite[\S2.1]{Fox}, set
\begin{equation}\label{eq:U}
U(f)\;=\;(\nabla f)^{T}\,\adj(\Hess f)\,\nabla f,
\end{equation}
where $\adj$ denotes the adjugate matrix. Equivalently, $U(f)$ is minus
the determinant of the \emph{bordered Hessian} (see Lemma~\ref{lem:bordered})
\begin{equation}\label{eq:bordered}
B(f)\;=\;
\begin{pmatrix}
\Hess f & \nabla f\\
(\nabla f)^{T} & 0
\end{pmatrix},
\qquad U(f)=-\det B(f).
\end{equation}
At a regular point of a level surface $\{f=c\}$, the condition
$U(f)=0$ is equivalent to degeneracy of its second fundamental form
(Lemma~\ref{lem:geom}). Thus $U(f)\equiv0$ makes every smooth level
surface \emph{developable}.
Fox \cite[\S2.1]{Fox} traces this regular-level-set criterion to Reilly.
Local smooth functions with $U=0$ need not be cylindrical
(Remark~\ref{rem:smooth}). For polynomials in three variables, however,
$U=0$ characterizes cylindricity.

\begin{theorem}[Main theorem]\label{thm:main}
Let $f\in\C[x_1,x_2,x_3]$ be a nonconstant polynomial. The following
conditions are equivalent:
\begin{enumerate}
\item[\textup{(i)}] $U(f)\equiv0$;
\item[\textup{(ii)}] there exists $0\neq w\in\C^{3}$ such that
$D_{w}f:=w\cdot\nabla f\equiv0$;
\item[\textup{(iii)}] after a change of coordinates in
$\mathrm{GL}_{3}(\C)$, $f\in\C[x_1,x_2]$.
\end{enumerate}
\end{theorem}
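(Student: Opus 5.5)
The implications (ii)$\Leftrightarrow$(iii) and (iii)$\Rightarrow$(i) are formal, so I would dispose of them first and spend the effort on (i)$\Rightarrow$(ii).

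\textbf{Easy directions.} For (ii)$\Rightarrow$(iii), complete $w$ to a basis $w,u_1,u_2$ of $\C^3$ and use coordinates in which $w=e_3$. Then $\partial f/\partial x_3\equiv0$, so $f\in\C[x_1,x_2]$. Conversely, if $f\in\C[x_1,x_2]$ in some coordinates, then $w=e_3$ pulled back gives $D_wf\equiv0$.

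For (iii)$\Rightarrow$(i), first record how $U$ transforms under $x\mapsto Ax$ with $A\in\mathrm{GL}_3(\C)$. The bordered matrix $B$ transforms by congruence with $\mathrm{diag}(A^T,1)$, so $U(f\circ A)=(\det A)^2\,U(f)\circ A$. It therefore suffices to treat $f\in\C[x_1,x_2]$. In that case the third row and column of $\Hess f$ vanish, so the only possibly nonzero entry of $\adj(\Hess f)$ is the $(3,3)$ entry. That entry is paired with $(\partial_3 f)^2=0$, hence $U(f)=0$.

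\textbf{Main direction (i)$\Rightarrow$(ii).} I would first pass to the leading form $f_d$, where $d=\deg f\ge1$. The case $d=1$ is trivial, so assume $d\ge2$. Every monomial of $U(f)$ has degree at most $4d-6$, and the degree-$(4d-6)$ part is exactly $U(f_d)$. Hence $U(f)\equiv0$ forces $U(f_d)\equiv0$.

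For a homogeneous $F$ of degree $d$, Euler's identities $\Hess F\cdot x=(d-1)\nabla F$ and $x\cdot\nabla F=dF$ let one row-reduce the bordered matrix. This should give the classical identity $\det B(F)=c_d\,F\det\Hess F$ with $c_d=-d/(d-1)\neq0$. So $\det\Hess f_d\equiv0$. By Gordan--Noether in three variables (valid for $n\le4$), the partials of $f_d$ are linearly dependent. Hence there is $w_0\neq0$ with $D_{w_0}f_d\equiv0$. The remaining task is to show that some direction $w$ annihilates all of $f$, not merely its top form.

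\textbf{Geometric step.} Here I would use the geometry from Lemma~\ref{lem:geom}. Since $U(f)\equiv0$, the map $\Phi=(f,[\nabla f])\colon\C^3\dashrightarrow\C\times\PP^2$ has generic rank at most $2$. Through a generic point $p$, the fibre of $\Phi$ is then a curve. At $p$, the tangent direction of this fibre is the kernel $v(p)$ of the second fundamental form of the level surface. Because $f$ is polynomial, I would show the fibre is an affine line: along the ruling, the tangent plane is constant and the restriction of $f$ is constant. The line $p+tv(p)$ then lies in $\{f=f(p)\}$ for all $t$. Comparing top-degree terms in $f(p+tv)\equiv f(p)$ gives $f_d(v(p))=0$ together with $D_{v(p)}f_d$-type relations.

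This yields a rational map $v\colon\C^3\dashrightarrow\PP^2$ of ruling directions, whose image lies in the curve $\{f_d=0\}\subset\PP^2$. That curve is a union of lines through $[w_0]$, since $f_d$ is a binary form in two linear forms. Two cases follow:
\begin{itemize}
\item If $v$ is constant, its value is the required $w$, because $D_wf=0$ along every ruling and rulings fill out a dense set.
\item Otherwise, level surfaces would be cones or tangent developables with rulings sweeping a line through $[w_0]$.
\end{itemize}

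\textbf{Main obstacle.} Ruling out this second case is where I expect the real difficulty. My first attempt would be to restrict $f$ to planes containing $w_0$. Via Bertini--Krull applied to the pencil $\{f=c\}$, I would show that non-constant $v$ forces $f=g(h)$ for some $h$ whose level sets are planes or cones. Since a cone's vertex must be fixed while $c$ varies, this should force $h$ linear modulo $w_0$, and then a direct computation shows $D_{w_0}f\equiv0$. If this fails, I would instead use the polynomial-decomposition keyword route: write $f$ in coordinates with $w_0=e_3$ as $f=\sum_k a_k(x_1,x_2)x_3^k$ with $\deg_{x_3}$ small relative to $d$, and then extract the vanishing of all $a_k$ for $k\ge1$ from the lower-degree homogeneous components of $U(f)\equiv0$.
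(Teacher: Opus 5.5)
Your easy directions are correct, and so are the preliminary facts in the main direction. The degree-$(4d-6)$ part of $U(f)$ is $U(f_d)$. The Euler reduction gives $\det B(F)=-\tfrac{d}{d-1}F\det\Hess F$. Gordan--Noether then yields $w_0$ with $D_{w_0}f_d\equiv0$, and the rulings $p+tv(p)$ lie in the level surfaces with $f_d(v(p))=0$.

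\textbf{The gap.} Your proof of (i)$\Rightarrow$(ii) stops exactly where its content lies: you never show that the ruling direction is constant, and, as you say yourself, Case~2 is left open. Neither fallback is an argument.
\begin{itemize}
\item The first asserts without justification that a non-constant $v$ forces $f=g(h)$ with $h$ having planar or conical levels. It also assumes a vertex ``must be fixed while $c$ varies.''
\item The first fallback also relies on $w_0$, which is not canonical when $f_d$ is a power of a linear form. For $f=x_1^3+x_2^2$, both $e_2$ and $e_3$ kill $f_d$, but only $e_3$ kills $f$. So the top form alone cannot pin down the axis.
\item The second fallback is only a hope about lower-order components of $U(f)$.
\end{itemize}
Your case split is also incomplete. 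The map $v$ may be constant on each level surface but vary with the level, giving cylindrical fibers with a moving axis. That is neither a cone nor a tangent developable.

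\textbf{The missing idea.} You should work with one \emph{smooth irreducible} fiber and then globalize by a degree comparison.
\begin{itemize}
\item Write $f=g\circ h$ with $h$ indecomposable. Lemma~\ref{lem:comp} gives $U(h)\equiv0$.
\item By Bertini--Krull and generic smoothness, a general fiber $X_c=\{h=c\}$ is smooth and irreducible, so $\Sing(\overline{X_c})$ lies at infinity.
\item This kills your Case~2 at once. A non-planar cone with finite vertex is singular at the vertex. A tangent developable is singular along its edge of regression, which cannot lie in the plane at infinity unless the whole surface does. This is the role of Piontkowski's theorem in the paper.
\item Hence $X_c$ is a cylinder with some axis $w$. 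Then $D_wh$ vanishes on $X_c$, and irreducibility gives $(h-c)\mid D_wh$.
\item Since $\deg D_wh<\deg h$, this forces $D_wh\equiv0$, so $D_wf=g'(h)\,D_wh\equiv0$.
\end{itemize}
This divisibility step is also what rules out a moving axis. With it, the leading-form and Gordan--Noether detour becomes unnecessary.
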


No hypothesis on the critical locus of $f$ is imposed; in particular $f$
may have arbitrarily singular fibers.

Three remarks place the statement in context.
First, the classical Hesse condition $\det\Hess f\equiv0$ does
\emph{not} force cylindricity: $f=x_1x_2+x_3$ has singular Hessian but
no nonzero constant translation direction, and $U(f)=-1$.
Since cylinders have singular Hessian, Theorem~\ref{thm:main} shows
that $U\equiv0$ is strictly stronger than $\det\Hess\equiv0$ in three
variables.

Second, assume $U(f)=0$. The proof uses Bertini--Krull finiteness
(Theorem~\ref{thm:BK}) and Piontkowski's rank-one cylinder theorem
(Theorem~\ref{thm:piontkowski}). The algebraic point is the passage
from one fiber to the polynomial. Write $f=g\circ h$ with $h$
indecomposable. The composition identity (Lemma~\ref{lem:comp})
gives $U(h)=0$, so a smooth irreducible fiber $h=c$ is cylindrical.
Its direction $w$ gives $h-c\mid D_wh$, and degree forces $D_wh=0$.
The chain rule then gives $D_wf=0$.
Thus one general fiber suffices, with no restrictions on the
critical locus or on special fibers (Section~\ref{sec:proof}).

Third, the dimension bound is sharp. For every $n\ge4$ we
exhibit a polynomial $G_n\in\C[x_1,\dots,x_n]$ with
$\nabla G_n\neq0$ everywhere, $U(G_n)\equiv0$, and no constant
translation direction (Proposition~\ref{prop:sharp}). Affinely smooth
non-conical developable hypersurfaces of Gauss rank two already occur
in $\C^4$; see \cite[Introduction]{P} and \cite[\S2]{AG}.

A real form can be derived from the main theorem~\ref{thm:main}.

\begin{corollary}[Real form]\label{cor:real}
Let $f\in\R[x_1,x_2,x_3]$ be nonconstant with $U(f)\equiv0$. Then there
exists $0\neq w\in\R^{3}$ with $D_w f\equiv0$; equivalently, after a
$\mathrm{GL}_{3}(\R)$ change of coordinates, $f\in\R[x_1,x_2]$.
\end{corollary}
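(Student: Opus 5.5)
We deduce the corollary from Theorem~\ref{thm:main} by complexification followed by a Galois-descent argument on the space of translation directions. First view $f$ as an element of $\C[x_1,x_2,x_3]$. The expression $U(f)$ in \eqref{eq:U} is a polynomial in the coefficients of $f$ and its derivatives, and it does not depend on whether we regard $f$ as real or complex. So $U(f)\equiv0$ still holds over $\C$. By the implication (i)$\Rightarrow$(ii) of Theorem~\ref{thm:main}, there is some $0\neq w\in\C^3$ with $D_wf\equiv0$.

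Next consider the set
\[
W=\{w\in\C^3:\ w\cdot\nabla f\equiv0\}.
\]
This set is a complex linear subspace, and it is nonzero by the previous step. Write $w=u+iv$ with $u,v\in\R^3$. Since $\nabla f$ has real coefficients, the identity $w\cdot\nabla f\equiv0$ says exactly that $u\cdot\nabla f+i\,v\cdot\nabla f\equiv0$. Both summands are real polynomials, so comparing real and imaginary parts of the coefficients gives $D_uf\equiv0$ and $D_vf\equiv0$. Because $w\neq0$, at least one of $u,v$ is a nonzero real vector, and that vector is the required direction. (Put differently, $W$ is defined by linear equations with real coefficients, namely the coefficient equations of $\sum_j w_j\,\partial_jf$. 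Hence $W$ is stable under complex conjugation and is spanned by real vectors.)

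For the equivalent formulation, extend the nonzero real vector $w$ to a basis $w, e_1', e_2'$ of $\R^3$ and use coordinates adapted to this basis, so that $w$ becomes the third coordinate direction. In the new coordinates $y$ we then have $\partial f/\partial y_3\equiv0$. Since we are in characteristic zero, a polynomial with vanishing $y_3$-derivative lies in $\R[y_1,y_2]$. Conversely, if $f\in\R[y_1,y_2]$ after a real linear change, then the third basis vector gives a real $w$ with $D_wf\equiv0$.

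None of these steps is a real obstacle; all the substance sits in Theorem~\ref{thm:main}. The only point needing care is the descent: the complex direction produced by the theorem need not itself be real, so we must pass to its real or imaginary part rather than try to rescale it.
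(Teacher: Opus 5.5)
Your proposal is correct and is essentially the paper's own proof. Like the paper, you complexify, apply Theorem~\ref{thm:main} to obtain a complex $w$ with $D_wf\equiv0$, write $w=a+ib$, and use the reality of the coefficients of $f$ to get $D_af\equiv D_bf\equiv0$ with $a$ or $b$ nonzero. Your basis-extension argument for the $\mathrm{GL}_3(\R)$ reformulation just repeats the real version of the (ii)$\Leftrightarrow$(iii) step from the main proof.
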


\section{Algebraic preliminaries}\label{sec:prelim}

We record the algebraic identities, the geometric interpretation of $U$,
and the two results used in the proof.

\subsection{The bordered Hessian}

For $f\in\C[x_1,\dots,x_n]$ define $B(f)$ as in \eqref{eq:bordered} and
$U(f)$ as in \eqref{eq:U}.

\begin{lemma}\label{lem:bordered}
$U(f)=-\det B(f)$.
\end{lemma}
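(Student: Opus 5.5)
The identity $\det\begin{pmatrix} A & b\\ b^{T} & 0\end{pmatrix} = -\,b^{T}\adj(A)\,b$ holds for an arbitrary $n\times n$ matrix $A$ over a commutative ring and any column vector $b$. I will prove it in this generality and then specialize to $A=\Hess f$, $b=\nabla f$ in $\C[x_1,\dots,x_n]$.

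First I handle the case where $A$ is invertible, working over the fraction field or treating the entries of $A$ as indeterminates. The Schur complement factorization
\[
\begin{pmatrix} A & b\\ b^{T} & 0\end{pmatrix}
=\begin{pmatrix} A & 0\\ b^{T} & 1\end{pmatrix}
\begin{pmatrix} I & A^{-1}b\\ 0 & -b^{T}A^{-1}b\end{pmatrix}
\]
gives determinant $\det A\cdot(-b^{T}A^{-1}b)$. Since $\det A\cdot A^{-1}=\adj A$, this equals $-b^{T}\adj(A)\,b$.

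Next I pass to arbitrary $A$ by a density or generic-matrix argument. Both sides are polynomials with integer coefficients in the entries of $A$ and $b$, and they agree on the Zariski-dense set where $\det A\neq0$. Therefore they agree identically as polynomials, and the identity specializes to any commutative ring, in particular to $\C[x_1,\dots,x_n]$.

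As an alternative, I could expand $\det B$ along the last row and then the last column. The term contributed by the entry $b_ib_j$ is $(-1)^{i+j+1}b_ib_j\det A_{ij}$, where $A_{ij}$ is $A$ with row $i$ and column $j$ deleted. Since $\adj(A)_{ji}=(-1)^{i+j}\det A_{ij}$, these terms sum to $-\sum_{i,j}b_i\adj(A)_{ji}b_j=-b^{T}\adj(A)\,b$. This direct expansion involves only sign bookkeeping, which is the one place where care is needed. The density argument avoids that bookkeeping entirely, so I prefer it; no step here is a genuine obstacle.
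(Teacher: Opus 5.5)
Your argument is correct and is essentially the paper's: you apply the Schur complement formula where the upper-left block is invertible, then extend to the singular case by a polynomial-identity argument. The only cosmetic difference is that the paper perturbs to $\Hess f+sI$ (invertible over the function field) and sets $s=0$, whereas you pass through a generic matrix and Zariski density; your cofactor-expansion alternative is also correct, signs included.
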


\begin{proof}
For a parameter $s$, the determinant of $H+sI$ (where $H:=\Hess f$) is a
nonzero polynomial in $s$, so $H+sI$ is invertible over the function
field. The Schur complement formula then gives
\[
\det\begin{pmatrix}H+sI & \nabla f\\ (\nabla f)^{T}&0\end{pmatrix}
=-(\nabla f)^{T}\adj(H+sI)\,\nabla f .
\]
Both sides are polynomial in $s$ and in the entries of $H$; setting
$s=0$ yields $\det B(f)=-U(f)$ for every Hessian, including the case
$\det\Hess f\equiv0$.
\end{proof}

For $A\in\mathrm{GL}_n(\C)$,
\[
B(f\circ A)(x)=
\begin{pmatrix}A^T&0\\0&1\end{pmatrix}
B(f)(Ax)
\begin{pmatrix}A&0\\0&1\end{pmatrix},
\]
and hence
\[
U(f\circ A)(x)=(\det A)^2U(f)(Ax).
\]
Thus the vanishing of $U$ is invariant under linear changes of coordinates.

\subsection{The geometric meaning of $U$}

\begin{lemma}\label{lem:geom}
Let $f\in\C[x_1,x_2,x_3]$ and let $p$ be a regular point of the level
surface $X=\{f=c\}$. Then $U(f)(p)=0$ if and only if the second
fundamental form of $X$ at $p$ is degenerate; equivalently, the Gauss
map of $X$ has rank $\le1$ at $p$.
\end{lemma}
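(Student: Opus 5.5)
The plan is to reduce everything to a linear-algebra statement at the single point $p$, after normalizing the coordinates. Let $v=\nabla f(p)\neq0$ and $H=\Hess f(p)$. The tangent plane is $T_pX=v^{\perp}=\{u:\ v^{T}u=0\}$, using the bilinear pairing $u^{T}v$ rather than a Hermitian one, since we work over $\C$. I take the second fundamental form of $X$ at $p$ to be the restriction of the quadratic form $u\mapsto u^{T}Hu$ to $T_pX$, up to the nonzero factor $1/\lvert\nabla f\rvert$ in the real case. Indeed, differentiating $f(\gamma(t))=c$ twice along a curve in $X$ gives $\gamma'^{T}H\gamma'+v^{T}\gamma''=0$, so the normal component of the acceleration is governed by $H|_{T_pX}$. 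The Gauss map is $x\mapsto[\nabla f(x)]\in\PP^{2}$, and its differential at $p$ is $u\mapsto Hu \bmod v$, restricted to $T_pX$. The Gauss map therefore has rank $\le1$ exactly when the $2\times2$ form $H|_{T_pX}$ is degenerate. This takes care of the second equivalence in the statement.

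For the main equivalence, I use the transformation rule $U(f\circ A)(x)=(\det A)^2U(f)(Ax)$ established above. The same computation applies pointwise to the constant matrices $B$: for $A\in\mathrm{GL}_3(\C)$, replacing $(H,v)$ by $(A^{T}HA,A^{T}v)$ multiplies $\det\begin{pmatrix}H&v\\v^{T}&0\end{pmatrix}$ by $(\det A)^2\neq0$. So I may choose $A$ so that $A^{T}v=\lambda e_3$ with $\lambda\neq0$. The first two columns of $A$ then span $T_pX$, and $A^THA$ restricted to $\operatorname{span}(e_1,e_2)$ is $H|_{T_pX}$ written in that basis.

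The remaining step is to expand
\[
\det\begin{pmatrix}H'&\lambda e_3\\ \lambda e_3^{T}&0\end{pmatrix},\qquad H'=A^THA.
\]
Expanding along the last row and then the last column, both of which have a single nonzero entry $\lambda$ at position $3$, gives $-\lambda^{2}\det H'_{\{1,2\}}$. Here $H'_{\{1,2\}}$ is the upper-left $2\times2$ block, which is exactly the Gram matrix of $H|_{T_pX}$. Hence $U(f)(p)=-\det B(f)(p)$ vanishes if and only if $\det H|_{T_pX}=0$, that is, if and only if the second fundamental form is degenerate. This combines Lemma~\ref{lem:bordered} with the sign bookkeeping, and the sign does not matter for vanishing.

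The only delicate point is over $\C$, where $v^{T}v$ may be $0$ because the normal can be isotropic. The argument above never uses an orthogonal complement or the normalization $\lvert v\rvert$: it uses only the annihilator $v^{\perp}$, which is always a plane, and a basis adapted to it. So I would take care to define the second fundamental form as $H|_{T_pX}$ modulo the normal direction rather than through a unit normal. With that definition the equivalence holds uniformly.
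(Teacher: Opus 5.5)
Your proof is correct and follows the paper's route. Both arguments use a $\mathrm{GL}_3(\C)$ change of coordinates that puts $\nabla f(p)$ along $e_3$, after which $U(f)(p)$ equals $\lambda^2$ times the determinant of the tangential $2\times2$ block of the Hessian. The differences are cosmetic: you expand the bordered determinant and invoke Lemma~\ref{lem:bordered}, where the paper reads off the $(3,3)$ cofactor of $\adj H$ directly, and you justify the Gauss-map rank claim via the differential $u\mapsto Hu \bmod v$ (implicitly using $\C^3/\langle v\rangle\cong (v^{\perp})^*$), which the paper only asserts.
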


\begin{proof}
After a linear change of coordinates in $\mathrm{GL}_3(\C)$ chosen so
that the $(x_1,x_2)$-plane spans $\ker df(p)$, we may assume
$\nabla f(p)=(0,0,f_3)$ with $f_3\neq0$. Writing $H=(f_{ij})$, only the
last row of the adjugate contributes:
\[
U(f)(p)=\big(0,0,f_3\big)\,\adj(H)\,
\big(0,0,f_3\big)^{T}
=f_3^{2}\,\det\begin{pmatrix}f_{11}&f_{12}\\ f_{12}&f_{22}\end{pmatrix}\!(p).
\]
The tangent space $T_pX=\ker df(p)$ is the $(x_1,x_2)$-plane, and the
(affine or projective) second fundamental form of $X$ at $p$ is
represented by the restriction of $\Hess f(p)$ to $T_pX$, divided by
the nonzero scalar $f_3(p)$. The differential of the projective Gauss
map of the projective closure of $X$ has the same rank as this second
fundamental form. No Euclidean metric is involved.
Hence $U(f)(p)=0$ if and only if
$\det II_p=0$, which for a surface means $\rank II_p\le1$, i.e.\ the
Gauss map drops rank at $p$.
\end{proof}

Consequently, if $U(f)\equiv0$ then every smooth level surface of $f$
is \emph{developable}: its Gauss rank is at most one at every point.

\subsection{A composition identity}

We use the following polynomial form of Fox's reparametrization identity
\cite[\S2.2]{Fox}.

\begin{lemma}\label{lem:comp}
Let $h\in\C[x_1,\dots,x_n]$ and let $g\in\C[t]$ be nonconstant. Then
\[
U(g\circ h)=\big(g'(h)\big)^{n+1}\,U(h).
\]
\end{lemma}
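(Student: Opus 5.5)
The plan is to compute $\nabla(g\circ h)$ and $\Hess(g\circ h)$ by the chain rule and then evaluate $\det B(g\circ h)$ via Lemma~\ref{lem:bordered} using row and column operations. Write $u=h$, $a=g'(h)$, $b=g''(h)$, $\nabla=\nabla h$ and $H=\Hess h$. The chain rule gives
\[
\nabla(g\circ h)=a\nabla,\qquad \Hess(g\circ h)=aH+b\,\nabla\nabla^{T}.
\]
Hence
\[
B(g\circ h)=\begin{pmatrix} aH+b\nabla\nabla^{T} & a\nabla\\ a\nabla^{T} & 0\end{pmatrix}.
\]

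The first step removes the rank-one term. Subtract $(b/a)\nabla$ times the last row from the first $n$ rows; this kills $b\nabla\nabla^T$ and leaves the last column unchanged, because its last entry is $0$. This division is legitimate over the field $\C(x)$, since $g'(h)\not\equiv0$: $g$ is nonconstant and $h$ is nonconstant (if $h$ is constant both sides are $0$). To stay polynomial, one can instead note that the operation is just left multiplication by the unimodular matrix
\[
\begin{pmatrix}I & -(b/a)\nabla\\ 0&1\end{pmatrix}
\]
and then clear denominators at the end.

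The second step factors out the scalars. We now have
\[
\begin{pmatrix} aH & a\nabla\\ a\nabla^{T}&0\end{pmatrix}.
\]
Pulling $a$ out of each of the first $n$ rows gives a factor $a^n$ and the matrix
\[
\begin{pmatrix} H & \nabla\\ a\nabla^T & 0\end{pmatrix}.
\]
Pulling $a$ out of the last row then gives one more factor of $a$, so
\[
\det B(g\circ h)=a^{n+1}\det B(h).
\]
Applying Lemma~\ref{lem:bordered} to both sides yields $U(g\circ h)=g'(h)^{n+1}U(h)$.

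There is no real obstacle here; the only point needing care is the division by $a$. It can be avoided entirely by viewing the identity as one between polynomials in independent symbols $a,b$, the entries of $\nabla$, and $H$. There it holds on the dense set $a\neq0$, hence identically.
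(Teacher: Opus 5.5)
Your proof is correct and follows the paper's argument essentially verbatim. Both proofs use the chain-rule form of $B(g\circ h)$, remove $b\,\nabla h\,\nabla h^{T}$ by the same row operation (subtracting $(b/a)\,\partial_i h$ times the last row), read off $\det(a\,B(h))=a^{n+1}\det B(h)$, and extend from $\{a\neq0\}$ to everywhere by density, with constant $h$ treated separately. Your remark about treating $a,b,\nabla h,H$ as independent symbols is just a cleaner phrasing of that density step, and it makes the constant-$h$ case automatic.
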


\begin{proof}
If $h$ is constant, both sides vanish. Assume henceforth that $h$ is
nonconstant, so $g'(h)\not\equiv0$.
Write $a=g'(h)$, $b=g''(h)$, $H=\Hess h$, $v=\nabla h$. Then
$\nabla(g\circ h)=a\,v$ and
$\Hess(g\circ h)=aH+b\,vv^{T}$, so
\[
B(g\circ h)=
\begin{pmatrix}
aH+b\,vv^{T} & av\\
av^{T} & 0
\end{pmatrix}.
\]
On the Zariski-open set $\{a\neq0\}$, subtracting $(bv_i/a)$ times the
last row from the $i$-th row ($i=1,\dots,n$) eliminates the $b\,vv^{T}$
block and leaves the matrix
$\begin{pmatrix}aH&av\\ av^{T}&0\end{pmatrix}=a\,B(h)$, whose
determinant is $a^{n+1}\det B(h)$. By Lemma~\ref{lem:bordered} this
gives $U(g\circ h)=a^{n+1}U(h)$ on $\{a\neq0\}$; both sides are
polynomials, so the identity holds everywhere.
\end{proof}

\subsection{Two quoted theorems}

Recall that $h\in\C[x_1,\dots,x_n]$ is \emph{indecomposable} if it
cannot be written as $g\circ h_0$ with $\deg g\ge2$.

\begin{theorem}[Bertini--Krull]\label{thm:BK}
Let $h\in\C[x_1,\dots,x_n]$, $n\ge2$, be indecomposable. Then $h-c$ is
irreducible for all but finitely many $c\in\C$; in fact,
the number of reducible fibers is at most $\deg h-1$.
\end{theorem}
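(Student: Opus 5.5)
The plan is to separate the qualitative statement (finiteness) from the quantitative bound $\deg h-1$, and to prove both by passing to the generic fiber.

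\textbf{Step 1: the generic fiber is absolutely irreducible.} Let $t$ be transcendental over $\C$. I first show that $h-t$ is irreducible over an algebraic closure $\overline{\C(t)}$. A standard criterion says this holds if and only if $\C(h)$ is algebraically closed in $\C(x_1,\dots,x_n)$. Suppose instead that the algebraic closure $L$ of $\C(h)$ in $\C(x)$ is strictly larger. Then $L$ has transcendence degree one over $\C$, and by L\"uroth's theorem $L=\C(u)$ for some rational function $u$. I then need the polynomial version of L\"uroth, Zaks' lemma: because $\C[x]$ is a UFD and its units are constants, $u$ can be chosen in $\C[x]$. This gives $h=g(u)$ with $g\in\C[t]$ and $\deg g=[L:\C(h)]\ge2$. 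That contradicts indecomposability, so $h-t$ is absolutely irreducible.

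\textbf{Step 2: specialisation gives finiteness.} Absolute irreducibility of a polynomial of bounded degree is a constructible condition on its coefficients, by Noether's irreducibility forms. Since it holds at the generic point $c=t$, it holds on a nonempty Zariski-open subset of $\C$. Hence $h-c$ is irreducible for all but finitely many $c$.

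\textbf{Step 3: the bound $\deg h-1$ (Stein's inequality).} This is where I expect the real work, and I would follow Lorenzini's geometric argument.
\begin{itemize}
\item \emph{Reduction to $n=2$.} Restrict $h$ to a general affine plane $P\subset\C^n$. By the Bertini irreducibility theorem, for general $P$ every irreducible component of every fiber $h=c$ restricts to an irreducible curve. Components with distinct radicals also stay distinct. So the count $\sum_c(m_c-1)$, where $m_c$ is the number of irreducible components of $h=c$, does not drop. The degree is preserved, and a general restriction stays indecomposable (alternatively, rerun Step 1 for the restriction).
\item \emph{Compactify the pencil.} For $n=2$, view $h$ as a pencil of curves $\{H-cZ^d=0\}$ in $\PP^2$, where $d=\deg h$. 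Resolve its base points on the line at infinity to get a rational surface $X$ with a morphism $\pi\colon X\to\PP^1$. Its general fiber is irreducible by Step 1.
\item \emph{Picard count.} Use the Shioda--Tate/Zariski lemma bookkeeping: $\rho(X)=1+N$, where $N$ is the number of blow-ups. The fiber classes, together with a horizontal class, the components at infinity and the exceptional curves, span $\mathrm{NS}(X)$. The only relations among vertical components are the fiber relations.
\item \emph{Extract the inequality.} Comparing ranks gives $\sum_c(m_c-1)\le d-1$. The main point is that all but one of the blow-ups over infinity contribute horizontal or infinity components. The bookkeeping here is delicate.
\end{itemize}

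If this count becomes unwieldy, there is a fallback. Consider the rational $1$-forms $dF_{c,j}/F_{c,j}$, where the $F_{c,j}$ are the components of the reducible fibers. I would show that, modulo $dh/(h-c)$, they are linearly independent. Restricted to a generic line, they then inject into a space of dimension at most $d-1$.
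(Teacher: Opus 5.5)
\textbf{Steps 1--2 are fine; Step 3 does not yet prove the bound.} The paper gives no proof of this statement. It quotes finiteness from Schinzel and Bodin--D\`ebes--Najib, and the bound $\deg h-1$ from Stein and Najib. Your Steps~1--2 are a correct proof of finiteness, and they are the standard argument behind those references. (For $n\ge2$ the L\"uroth theorem you need is the generalized one for transcendence-degree-one subfields of $\C(x_1,\dots,x_n)$.)

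\emph{The reduction to $n=2$.} You claim that for general $P$ every component of \emph{every} fiber restricts irreducibly. This is false. For $h=x_1^2+x_2^2+x_3$ all fibers are irreducible. Yet on every plane $x_3=ax_1+bx_2+e$ the restriction is $(x_1+a/2)^2+(x_2+b/2)^2+e'$ with $e'=e-(a^2+b^2)/4$, and its fiber at $e'$ is a pair of lines. You only need Bertini for the components of the finitely many reducible fibers of $h$ (finite by Step~2), since restriction can only increase $\sum_c(m_c-1)$, which is the harmless direction. Also, ``rerun Step~1 for the restriction'' is circular, because Step~1 takes indecomposability as input. Non-compositeness of $h|_P$ instead follows from two facts: Bertini over $\overline{\C(t)}$ applied to the absolutely irreducible generic fiber $\{h=t\}$, and Zariski density of planes defined over $\C$.

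\emph{The Picard count.} This is the more serious gap: your sketch never reaches the inequality, and the ``main point'' you state is not the mechanism. Let $m^X_v$ be the number of components of $\pi^{-1}(v)$. We have $\rho(X)=N+1$. By Zariski's lemma the vertical classes span a subspace of dimension $1+\sum_{v\in\PP^1}(m^X_v-1)$, and this subspace misses any horizontal class, so the dimension is $\le N$. Exceptional curves can be vertical over finite $c$ as well as over $\infty$. Each vertical one is counted once on each side and cancels, which leaves $\sum_{c\in\C}(m_c-1)\le N_{\mathrm{hor}}-1$, where $N_{\mathrm{hor}}$ is the number of horizontal (dicritical) exceptional curves.

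The missing idea is $N_{\mathrm{hor}}\le d$. Let $C_t=\{h=t\}$, let $\overline{C_t}$ be its closure in $\PP^2$, and let $\tilde C_t$ be its strict transform. For general $t$, $\tilde C_t$ is smooth and is the normalization of $\overline{C_t}$. Each dicritical curve meets $\tilde C_t$ over $L_\infty$, and for general $t$ different dicritical curves meet it at different points, so they correspond to distinct places at infinity of $C_t$. There are at most $\overline{C_t}\cdot L_\infty=d$ such places.

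\emph{Two further points.} To turn $\sum_c(m_c-1)\le d-1$ into a bound on the number of reducible fibers, you also need $m_c\ge2$ whenever $h-c$ is reducible. This holds because $h-c=\lambda p^m$ with $m\ge2$ would make $h=c+\lambda p^m$ decomposable. Finally, the $1$-form fallback is too vague to count as a proof, since the $(d-1)$-dimensional target space is never identified.
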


For the finiteness statement, see \cite[Theorem~37]{Sch} and
\cite[\S1]{BDN}. The bound is the Stein--Lorenzini--Najib inequality
\cite{St,Naj}.

\begin{theorem}[Piontkowski {\cite[Corollary~5]{P}}]\label{thm:piontkowski}
Let $X\subset\PP^{N}$ be an irreducible developable variety of Gauss
rank one, and let $H_\infty\subset\PP^{N}$ be a hyperplane with
$X\not\subset H_\infty$ and $\Sing(X)\subset H_\infty$.
Then $X$ is a cone whose vertex lies in $H_\infty$.
In particular, if $N=3$, the affine part
$X\cap(\PP^{3}\setminus H_\infty)$ is invariant under translations in a
fixed nonzero direction $w\in\C^{3}$.
\end{theorem}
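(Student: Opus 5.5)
The plan is to use the classical structure theory of developable varieties of Gauss rank one, and to locate their singularities on the Gauss fibres. Let $X\subset\PP^N$ be irreducible of dimension $k$ with Gauss rank one. By the linearity of general Gauss fibres, the closure of a general fibre of the Gauss map is a linear space $L_t\cong\PP^{k-1}$. The tangent space $T$ is constant along $L_t$, so $X$ is swept out by a one-parameter family $\{L_t\}$. I would parametrize locally as $x(t,u)=\sum_i u_i\,p_i(t)$, with $p_0(t),\dots,p_{k-1}(t)$ spanning $L_t$. The differential of this parametrization has rank $k$ exactly where the derivative $\sum u_i p_i'(t)$ is independent of $L_t$ modulo $L_t$. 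Since $T$ is constant along $L_t$, all the $p_i'(t)$ lie in a fixed $(k)$-dimensional space $T_t\supset L_t$. Hence the condition for a singular point of the parametrization is the vanishing of one linear form $\lambda_t(u)$ on $L_t$.

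This gives a dichotomy. Either $\lambda_t\equiv0$ for general $t$, which would make $\dim X<k$ and is absurd, or $\lambda_t\neq0$. In the second case the focal locus $F_t=\{\lambda_t=0\}$ is a hyperplane in $L_t$, that is, a $\PP^{k-2}$ (a point when $N=3$). The key claim is that $F_t\subset\Sing(X)$. I expect to show this by the classical argument that the envelope of the $L_t$ forms a cuspidal edge along the focal locus: the tangent spaces $T_t$ fail to converge to a single limit there, or equivalently, the Gauss map cannot extend to a morphism. This step is the main obstacle. I would first handle it for surfaces in $\PP^3$, where the focal point is either a cone vertex (singular unless $X$ is a plane, which has Gauss rank $0$) or a point on the edge of regression of a tangent developable (a cusp). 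I would then reduce the general case to this by slicing with a general linear space transverse to the rulings.

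Granting $F_t\subset\Sing(X)\subset H_\infty$, note that $L_t\not\subset H_\infty$ for general $t$, since $X\not\subset H_\infty$. So $L_t\cap H_\infty$ is a hyperplane of $L_t$ containing $F_t$, which forces $F_t=L_t\cap H_\infty$. The remaining step is to show that $F_t$ is independent of $t$. Write $F_t$ via the points $q(t)$ in the surface case, or via the linear spaces $F_t$ in general. If $F_t$ genuinely moves, then $X$ is the tangent developable or osculating scroll of the focal variety $\bigcup_t F_t$. Its rulings $L_t$ are then spanned by $F_t$ and the derivative of $F_t$, all of which lie in $H_\infty$ because $F_t\subset H_\infty$ for all $t$. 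This would give $L_t\subset H_\infty$ and hence $X\subset H_\infty$, a contradiction. Therefore $F_t\equiv F$ is constant, and $X$ is a cone with vertex $F\subset H_\infty$.

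For $N=3$, $F$ is a point $[w:0]\in H_\infty$ with $w\in\C^3\setminus\{0\}$. Every affine ruling of $X\cap(\PP^3\setminus H_\infty)$ is then a line through this point at infinity, i.e.\ an affine line in direction $w$. So the affine part is a union of lines parallel to $w$ and is therefore invariant under translation by $w$, as stated in Theorem~\ref{thm:piontkowski}.
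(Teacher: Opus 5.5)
Your argument is essentially sound, but it takes a different route from the paper. The paper does not prove this statement: it quotes Piontkowski's Corollary~5 and only adds that a cone with vertex in $H_\infty$ restricts to an affine cylinder, which is exactly your last paragraph. What a self-contained focal-locus proof like yours buys is a transparent view of where $\Sing(X)\subset H_\infty$ enters. It pins the focal hyperplane to $F_t=L_t\cap H_\infty$, and a family moving inside the fixed $H_\infty$ has its derived family $F_t+F_t'=L_t$ inside $H_\infty$ too.

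That endgame is correct, but you should state the one-line reason that $L_t=F_t+F_t'$. If $f(t)=\sum_i u_i(t)p_i(t)$ with $\lambda_t(u(t))=0$, then $f'(t)=\sum_i u_i'(t)p_i(t)+\sum_i u_i(t)p_i'(t)\in\hat L_t$. Hence $\hat F_t+\hat F_t'\subset\hat L_t$, with equality when $F_t$ moves, since $\hat F_t$ is a hyperplane of $\hat L_t$. Meanwhile $\hat F_t\subset\hat H_\infty$ for all $t$ forces $\hat F_t'\subset\hat H_\infty$.

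The step that carries the weight, $F_t\subset\Sing(X)$, is only sketched, and the mechanism you propose is wrong. For the tangent developable of a twisted cubic the tangent planes \emph{do} converge along the edge of regression, to the osculating plane. On the parameter space the Gauss map is just $t\mapsto$ osculating plane at $t$, a morphism, so a cuspidal edge is singular without any jump of tangent planes.

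A clean proof uses birationality instead. Let $\Phi\colon\mathcal L\to X$ be the map from the $\PP^{k-1}$-bundle of rulings over the normalized Gauss curve. It is proper and birational, and your computation shows that $d\Phi$ has rank $k-1$ exactly at focal points. Suppose $p\in F_{t_0}$, with $t_0$ general, is a smooth point of $X$.
\begin{enumerate}
\item[(a)] If $\Phi^{-1}(p)$ is finite, then $\Phi$ is finite and birational over a smooth, hence normal, neighbourhood of $p$. By Zariski's Main Theorem it is a local isomorphism, contradicting the rank drop.
\item[(b)] If $\Phi^{-1}(p)$ is infinite, then $p$ lies on every ruling, so $X$ is a cone with vertex $p$. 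Smoothness at $p$ then gives $X=T_pX$, a linear space of Gauss rank zero, a contradiction.
\end{enumerate}
Your fallback (cone vertex or cuspidal edge for surfaces, then Bertini slicing) can also be made to work. But the slices are surfaces in $\PP^{N-k+2}$, not $\PP^3$, and you would still need the explicit cusp computation along the edge of regression.
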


The last statement is the standard projective-to-affine correspondence:
a cone whose vertex lies in the hyperplane at infinity restricts to an
affine cylinder (cf.\ \cite[\S6]{AG}). A developable surface of Gauss
rank zero is a plane, which is also a cylinder.
Related differential-geometric cylinder theorems are due to
Hartman--Nirenberg \cite[Theorem~III]{HN} and Abe \cite{Abe}.

\section{Proof of the main theorem}\label{sec:proof}

The implications (iii)$\Rightarrow$(ii) and (ii)$\Rightarrow$(i) are
elementary. Indeed, (iii) means that $\partial f/\partial x_3=0$ in
suitable coordinates, which is (ii) with $w=e_3$; conversely, sending
$w$ to $e_3$ by an element of $\mathrm{GL}_3(\C)$ turns $D_w f=0$ into
$\partial f/\partial x_3=0$, which integrates to $f\in\C[x_1,x_2]$. If
$D_w f=0$, then differentiating gives $\Hess f\cdot w=\nabla(D_wf)=0$,
so $\rank\Hess f\le2$ everywhere; if the rank is $\le1$ then
$\adj\Hess f=0$, and if it is $2$ then
$\adj\Hess f=\lambda\, ww^{T}$ up to sign, whence
$U(f)=\lambda\,(w\cdot\nabla f)^{2}=0$. This proves (i).

The content is (i)$\Rightarrow$(ii). Assume $U(f)\equiv0$.

\smallskip
\noindent\emph{Step 1: reduction to an indecomposable generator.}
Write $f=g\circ h$ with $h\in\C[x_1,x_2,x_3]$ indecomposable and
$g\in\C[t]$ (iterate any nontrivial decomposition; degrees decrease, so
the process terminates). If $\deg h=1$, then $h$ is affine-linear and
$f$ already has the form (iii), so we are done. Assume
$\deg h\ge2$. By Lemma~\ref{lem:comp},
$0=U(f)=(g'(h))^{4}\,U(h)$; since $g'\not\equiv0$, we conclude
$U(h)\equiv0$.

\smallskip
\noindent\emph{Step 2: a good fiber.}
By Theorem~\ref{thm:BK} the fiber $X_c=\{h=c\}$ is irreducible for all
but finitely many $c$, and by generic smoothness over $\C$ the fiber is
smooth for all but finitely many $c$. Fix $c$ outside both finite sets.
Then $X_c$ is a smooth irreducible affine surface, and $U(h)\equiv0$
implies, by Lemma~\ref{lem:geom}, that the second fundamental form of
$X_c$ is degenerate at every point: $X_c$ is developable, of Gauss rank
$\le1$.

\smallskip
\noindent\emph{Step 3: the fiber is a cylinder.}
Let $\overline{X_c}\subset\PP^{3}$ be the projective closure and
$H_\infty=\PP^{3}\setminus\C^{3}$. Since $X_c$ is smooth,
$\Sing(\overline{X_c})\subset H_\infty$. If the Gauss rank of
$\overline{X_c}$ is zero, $\overline{X_c}$ is a plane. Otherwise the
Gauss rank is one on a dense open set, and Theorem~\ref{thm:piontkowski}
makes $\overline{X_c}$ a cone whose vertex is a point of $H_\infty$.
In both cases there is a nonzero vector $w\in\C^{3}$ such that $X_c$ is
invariant under the translations $p\mapsto p+tw$, $t\in\C$.

\smallskip
\noindent\emph{Step 4: the axis annihilates $h$ globally.}
Since $h$ is constant on $X_c$ and $X_c$ is $w$-invariant, the
polynomial $D_w h$ vanishes on $X_c$. Because $h-c$ is irreducible and
$X_c$ is reduced, this gives $(h-c)\mid D_w h$. But
\[
\deg D_w h\;\le\;\deg h-1\;<\;\deg(h-c),
\]
so divisibility forces $D_w h\equiv0$.

\smallskip
\noindent\emph{Step 5: descent to $f$.}
$D_w f=g'(h)\cdot D_w h\equiv0$, which is (ii). \qed

\begin{remark}\label{rem:strategy}
The degree argument in Step~4 works in every dimension. The obstruction
is Step~3: affinely smooth developable hypersurfaces need not be cones
in higher dimensions \cite[Introduction]{P}. At regular points, $U=0$ implies
Gauss rank at most $n-2$, which for $n\ge4$ need not be at most one.
The rank-one cylinder theorem therefore does not apply in general.
\end{remark}

\pagebreak
\section{Consequences and sharpness}

\subsection{The real form}

\begin{proof}[Proof of Corollary~\ref{cor:real}]
Apply Theorem~\ref{thm:main} to $f$ viewed as a complex polynomial:
there is $0\neq w\in\C^{3}$ with $D_w f=0$. Write $w=a+ib$ with
$a,b\in\R^{3}$. Then $D_wf=D_af+i\,D_bf$, and since $D_af$ and $D_bf$
have real coefficients, $D_wf\equiv0$ implies $D_af\equiv0$ and
$D_bf\equiv0$. At least one of $a,b$ is nonzero.
\end{proof}

\subsection{Sharpness in every dimension $n\ge4$}

For $n\ge4$ define
\[
G_n\;=\;x_1^{2}x_2+x_1x_3+x_4+\sum_{j=5}^{n}x_j^{2}
\;\in\;\C[x_1,\dots,x_n].
\]

\begin{proposition}\label{prop:sharp}
For every $n\ge4$: \textup{(a)} $\nabla G_n$ has no zeros on $\C^{n}$;
\textup{(b)} $U(G_n)\equiv0$; \textup{(c)} $G_n$ admits no nonzero
constant translation direction, and in particular is not cylindrical.
\end{proposition}

\begin{proof}
(a) $\partial G_n/\partial x_4=1$. (b) The Hessian of $G_n$ is block
diagonal with the quadratic part contributing $2I_{n-4}$ and
the remaining block supported on the coordinates $x_1,x_2,x_3$, where
\[
\Hess_{x_1,x_2,x_3}(x_1^{2}x_2+x_1x_3)=
\begin{pmatrix}
2x_2 & 2x_1 & 1\\
2x_1 & 0 & 0\\
1 & 0 & 0
\end{pmatrix}
\]
has rank $\le2$ (its third row and column force the determinant to
vanish). Hence $\rank\Hess G_n\le 2+(n-4)=n-2$ everywhere, so
$\adj\Hess G_n=0$ and $U(G_n)\equiv0$. (c) For
$w=(a_1,\dots,a_n)$,
\[
D_wG_n=a_1(2x_1x_2+x_3)+a_2x_1^{2}+a_3x_1+a_4
+2\sum_{j\ge5}a_jx_j.
\]
Reading off the coefficients of $x_1^{2}$, $x_1x_2$, $x_3$, $x_1$, the
constant term and the $x_j$ ($j\ge5$) in this order forces $w=0$.
\end{proof}

Thus the implication (i)$\Rightarrow$(ii) of Theorem~\ref{thm:main}
fails in every dimension $n\ge4$, even for polynomials without critical
points.

\subsection{Comparisons}

\begin{remark}[Hesse and Gordan--N\"other]
For \emph{homogeneous} forms
in three variables the Gordan--N\"other theorem \cite{GN} does force a
cone from $\det\Hess\equiv0$. De~Bondt--van~den~Essen \cite{BE} give
normal forms for three-variable polynomials with singular Hessian.
Affine-linear terms do not change the Hessian but may destroy every
constant translation direction, as $x_1x_2+x_3$ shows. Their
classification alone therefore does not imply Theorem~\ref{thm:main};
the bordered invariant also records the gradient.
\end{remark}

\begin{remark}[The smooth category]\label{rem:smooth}
Local $C^\infty$ rigidity fails. On the open set $\{y\neq0\}$, the
function
\[
F(x,y,z)=z-\frac{x^{2}}{2y}
\]
satisfies $U(F)=0$: writing $F=z-g(x,y)$ with $g=x^{2}/(2y)$, one has
$\det\Hess g=0$, and for a graph one computes
$U(z-g)=\det\Hess g$. On the other hand,
$aF_x+bF_y+cF_z\equiv0$ forces $a=b=c=0$, so $F$ has no nonzero
constant translation direction; its level surfaces are open subsets of
quadratic cones. The polynomial rigidity in Theorem~\ref{thm:main}
relies on Bertini--Krull finiteness and the degree comparison of Step~4,
which have no smooth analogues. For the equiaffine geometry of smooth
level sets, see Fox \cite[\S2]{Fox}. It would be interesting to know
whether the conclusion persists for entire functions of finite order.
\end{remark}

\begin{remark}[Positive characteristic]
The proof uses characteristic zero, in particular in the composition
reduction and the Gauss-map argument. In positive characteristic, a
degenerate Gauss map need not yield the ruled developability used here;
see \cite[Introduction and Example~7.2]{Fuk}.
\end{remark}

\medskip
\noindent\textbf{Use of AI tools.}
Kimi helped locate Piontkowski's paper \cite{P}. ChatGPT (OpenAI) helped
identify errors in an early draft and simplify Step~4 of the proof of
Theorem~\ref{thm:main}. Codex (OpenAI) assisted with exposition and
\LaTeX{} editing, checks of mathematical arguments and references, and
compilation and PDF verification. The author takes responsibility for
the mathematical content and the accuracy of the references.

\end{document}